\tikzset{
base/.style = {rectangle, draw=black, minimum width = 1cm, minimum height = 0.8cm, text centered},  
arrow/.style = {>=latex, thick}}
\tikzset{
add/.style n args={4}{
    minimum width=6mm,
    path picture={
        \draw[black] 
            (path picture bounding box.south east) -- (path picture bounding box.north west)
            (path picture bounding box.south west) -- (path picture bounding box.north east);
        \node at ($(path picture bounding box.north)+(0,0.19)$)     {\tiny #1};
        \node at ($(path picture bounding box.north)+(0.19,0)$)      {\tiny #2};
        \node at ($(path picture bounding box.north)+(0,-0.19)$)        {\tiny #3};
        \node at ($(path picture bounding box.north)+(-0.19,0)$)     {\tiny #4};
        }
    }
}
\newcommand{\col}{black}
\newcommand{\p}{\mathcal{P}}
\newcommand{\F}{\mathcal{F}}
\newcommand{\G}{\mathcal{G}}
\newcommand{\A}{\mathcal{A}}
\newcommand{\U}{\mathcal{U}}
\newcommand{\X}{\mathcal{X}}
\newcommand{\hp}{{\color{\col}\hat{P}}}
\newcommand{\hpdrv}{{\color{\col}\hat{P}_{drv}}}
\newcommand{\hpdrvk}{{\color{\col}\hat{P}_{drv,k}}}
\newcommand{\hpeng}{{\color{\col}\hat{P}_{eng}}}
\newcommand{\hpengk}{{\color{\col}\hat{P}_{eng,k}}}
\newcommand{\upengk}{{\color{\col}\underline{P}_{eng,k}}}
\newcommand{\opengk}{{\color{\col}\overline{P}_{eng,k}}}
\newcommand{\hpemk}{\hat{P}_{em,k}}
\newcommand{\upemk}{{\color{\col}\underline{P}_{em,k}}}
\newcommand{\opemk}{{\color{\col}\overline{P}_{em,k}}}
\newcommand{\hpb}{{\color{\col}\hat{P}_{b}}}
\newcommand{\hpbk}{\hat{P}_{b,k}}
\newcommand{\upb}{{\color{\col}\underline{P}_{b}}}
\newcommand{\upbk}{{\color{\col}\underline{P}_{b,k}}}
\newcommand{\opb}{{\color{\col}\overline{P}_{b}}}
\newcommand{\opbk}{{\color{\col}\overline{P}_{b,k}}}
\newcommand{\hsk}{{\color{\col}\hat{\sigma}(k)}}
\newcommand{\hE}{{\color{\col}\hat{E}}}
\newcommand{\hEk}{{\color{\col}\hat{E}_k}}
\newcommand{\hf}{{\color{\col}\hat{f}}}
\newcommand{\hfk}{{\color{\col}\hat{f}_k}}
\newcommand{\hh}{{\color{\col}\hat{h}}}
\newcommand{\hhk}{{\color{\col}\hh_k}}
\newcommand{\hg}{{\color{\col}\hat{g}}}
\newcommand{\hgk}{{\color{\col}\hg_k}}
\newcommand{\hgkinv}{{\color{\col}\hat{g}_k^{-1}}}
\newcommand{\ak}[1]{{\color{\col}\alpha_{#1,k}}}
\newcommand{\bk}[1]{{\color{\col}\beta_{#1,k}}}
\def\uu{\underline{u}}
\def\ou{\overline{u}}
\def\ox{\overline{x}}
\def\ux{\underline{x}}
\def\hf{\hat{f}}
\def\hg{\hat{g}}
\def\con{\circ}
\newtheorem{proposition}{Proposition}[section]
\begin{document}

\title{Energy Management in Plug-in Hybrid Electric Vehicles: Convex Optimization Algorithms for Model Predictive Control}
%
%

\author{Sebastian East, Mark Cannon
\thanks{S. East and M. Cannon are with the Department
of Engineering Science, University of Oxford, Oxford, OX1 3PJ (e-mail: sebastian.east@eng.ox.ac.uk; mark.cannon@eng.ox.ac.uk)}}


\maketitle

\begin{abstract}
 This paper details an investigation into the computational performance of algorithms used for solving a convex formulation of the optimization problem associated with model predictive control for energy management in hybrid electric vehicles with nonlinear losses. A projected interior point method is proposed, where the size and complexity of the Newton step matrix inversion is reduced by applying inequality constraints on the control input as a projection, and its properties are demonstrated through simulation in comparison with an alternating direction method of multipliers (ADMM) algorithm, and general purpose convex optimization software CVX. It is found that the ADMM algorithm has favourable properties when a solution with modest accuracy is required, whereas the projected interior point method is favourable when high accuracy is required, and that both are significantly faster than CVX.
\end{abstract}

\begin{IEEEkeywords}
alternating direction method of multipliers, energy management, interior point method, model predictive control, plug-in hybrid electric vehicles.
\end{IEEEkeywords}

\section{Introduction}
\IEEEPARstart{I}{ncreased} electricification of road vehicles has been identified as a key short term solution to important societal issues including climate change and air pollution \cite{EhsaniTextbook}. Plug-in hybrid electic vehicles (PHEVs), where an electric propulsion system is complemented with an internal combustion engine, are currently a common configuration. Although the low energy density and lengthy recharge time of lithium ion batteries limits the viability of all-electric powertrains, analysis of daily driving behaviour reveals that 50\% of internal combustion powered miles can be powered electrically using a hybrid vehicle with an all-electric range of just 40 miles \cite{Ehsani2010}. The inclusion of an additional power source, however, introduces a challenging problem: at each instant during a given journey, how much power should be delivered from the motor, and how much should be delivered from the engine? 

This is known as the energy management problem \cite{Sciarretta2007}, and a simple heuristic is a charge depleting/charge sustaining strategy, where power is delivered from only the electric motor until the battery is sufficiently depleted, and then the vehicle is operated in a charge sustaining mode until the end of the journey \cite{Ehsani2010}. It has, however, been demonstrated that significant savings in fuel consumption can be made by delivering power from both the motor and engine simultaneously, and modulating the fraction delivered from each throughout the journey in what is known as a `blended mode' \cite{MarinaMartinez2016}. There are several methods for controlling the powertrain in this way, and the globally optimal solution can be obtained for complex, nonlinear models and/or integer conrol decisions (such as gear selection) using Dynamic Programming \cite{Sundstrom2010,Larsson2015}, but this approach is too computationally demanding for an online solution. Other studies have investigated methods based on Pontryagin's Minimum Principle \cite{Serrao2013a,Kim2011,Schmid2018}, although it is challenging to enforce complex constraints, such as the general state of charge constraint or engine switching, whilst still guaranteeing optimality.

Model predictive control (MPC) has shown promise in this application due to the inherent robustness to uncertainty in both the vehicle model and prediction of future driving behaviour \cite{Huang2017}, and nonlinear models of losses can be used throughout the hybrid powertrain for improved performance \cite{Buerger}. The associated online optimization problem is still computationally intractable when gear selection and engine switching are considered, so these elements are commonly removed from the problem or optimized externally \cite{Buerger,Elbert2014,Hadj-Said2016,Nuesch2014}, and it has been demonstrated that the power balance alone can be formulated, with nonlinear losses and without simplification, as a convex optimization problem with linear state dynamics \cite{EastCDC2018}.

For the last 30 years, the most popular algorithms for solving inequality constrained optimization problems have been interior point methods \cite{Gondzio2012}. Originally formulated as `primal' methods by approximating inequality constraints in an optimization problem with logarithmic barrier functions, their inherent ill-conditioning and numerical inefficiency rendered interior point methods ineffective until the publication of Karmakar's method \cite{Karmarkar1984} in 1984. The subsequently developed `primal-dual' interior point methods \cite{Potra2000} displayed excellent theoretical and practical properties, including polynomial complexity and a near constant number of iterations with variations in problem size, and today, a large volume of research output in the field of optimization for MPC is dedicated to the development and application of primal-dual interior point methods \cite{Wang2017,Zhang2018,Klintberg2017a}.

In the recently published literature, algorithms for solving convex formulations of the energy management problem have not been investigated and the optimization problem has normally been solved using general purpose convex optimization software \cite{Hadj-Said2016,Egardt2014,Hu2013,Nuesch2014}, with the exception of the alternating direction method of multipliers (ADMM) algorithm presented in \cite{EastCDC2018}. The first contribution of this paper is a projected interior point solver for this problem, where the size of the matrix inversion associated with the primal-dual Newton step is reduced by enforcing the element-wise inequality constraints on the decision variable as a projection, thereby reducing the computational requirement of each iteration. The algorithm is not domain specific, and is applicable to any MPC optimization problem with linear dynamics, a separable convex cost function of the control variable, and upper and lower bounds on the control and state variables. 

A primary motivation of this paper was to determine the relative computational benefits of second-order and first-order methods for the convex PHEV energy management formulation, and the second contribution is a set of numerical studies where the performance of the projected interior point algorithm is compared with the ADMM algorithm of \cite{EastCDC2018}. In these studies we demonstrate that the projected interior point method has superior convergence properties, but requires more time to obtain a solution with modest accuracy, and consequently is only suitable for real-time solutions over shorter horizons (in this case fewer than 500 samples). We also demonstrate that both methods are significantly faster than CVX \cite{CVX}, and in ADMM (using an improved implementation from \cite{EastCDC2018}) we demonstrate the first method capable of solving the energy management problem in real time, over long horizons ($\geq$1000 samples) when nonlinear system dynamics are considered and hard limits on both power and state of charge are enforced over the entire horizon.

The paper is organised as follows: in section \ref{section_2} the energy management problem, MPC framework, and convex reformulation are defined, and section \ref{section_interior_point} details the projected interior point method. The ADMM algorithm of \cite{EastCDC2018} is stated in section \ref{section_ADMM}, numerical experiments are presented in section \ref{section_numerical_experiments}, and the paper is concluded in section \ref{section_conclusion}.


\section{Energy Management Problem \& Model Predictive Control Framework}\label{section_2}


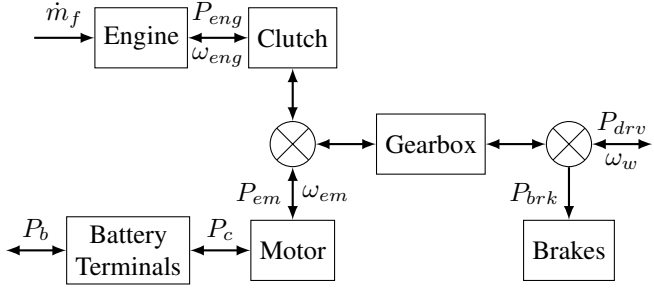
\begin{figure}
\begin{center}
\begin{tikzpicture}[node distance = 0.7cm and 0.8cm]
\node[] (fuel tank) {};
\node[base, right=of fuel tank] (engine) {Engine};
\node[base, right=of engine] (clutch) {Clutch};
\node[add={}{}{}{}, below=of clutch, circle, draw] (torque coupling) {};
\node[base, below=of torque coupling] (motor) {Motor};
\node[base, left=of motor] (battery) {\shortstack{Battery \\ Terminals}};
\node[left=of battery] (empty) {};
\node[base, right=of torque coupling] (gearbox) {Gearbox};
\node[add={}{}{}{}, right=of gearbox, circle, draw] (wheels) {};
\node[base, below=of wheels] (brakes) {Brakes};
\node[right=of wheels] (temp) {};
\draw[arrow, ->] (fuel tank) -- node[above] {$\dot{m}_f$} (engine);
\draw[arrow, <->] (engine) -- node[above] {$P_{eng}$} node[below] {$\omega_{eng}$} (clutch);
\draw[arrow, <->] (clutch) -- (torque coupling);
\draw[arrow, <->] (empty) -- node[above] {$P_b$} (battery);
\draw[arrow, <->] (battery) -- node[above] {$P_c$} (motor);
\draw[arrow, <->] (motor) -- node[left] {$P_{em}$} node[right] {$\omega_{em}$} (torque coupling);
\draw[arrow, <->] (torque coupling) -- (gearbox);
\draw[arrow, <->] (wheels) --  (gearbox);
\draw[arrow, ->] (wheels) -- node[left] {$P_{brk}$} (brakes);
\draw[arrow, <->] (wheels) -- node[above] {$P_{drv}$} node[below] {$\omega_w$} (temp);
\end{tikzpicture}
\caption{A diagram of a simplified model of a parallel PHEV powertrain, labelled with the main flows of power and rotational speeds.}
\label{fig_powertrain_model}
\end{center}
\end{figure}

Fig. \ref{fig_powertrain_model} shows a simplified diagram of a parallel PHEV powertrain, and illustrates the energy transfers that are considered as part of the energy management problem. At a given time, $t$, the mass flow rate of fuel delivered to the engine, $\dot{m}_f$, can be described by a time-varying function, $f$, of engine output power, $P_{eng}$, and engine shaft speed, $\omega_{eng}$, as 
\begin{equation} \label{eqn_fuel_consumption}
\dot{m}_f(t) = f(P_{eng}(t),\omega_{eng}(t),\sigma(t),t),
\end{equation}
where $\sigma$ describes the state of the engine and clutch engagement by
\begin{equation} \label{eqn_engine_switching}
\sigma(t) = \begin{cases} 1 & \text{engine on, clutch engaged} \\
0 & \text{engine off, clutch disengaged} \end{cases}.
\end{equation}
Similarly, the rate of consumption of the battery's internal chemical energy, $P_b$, can be described by a time varying function, $g$, of battery output power (i.e motor input power), $P_c$, which can in turn be described by a time varying function, $h$, of motor output power, $P_{em}$, and motor shaft speed, $\omega_{em}$: 
\begin{equation} \label{eqn_power_dynamics}
\begin{aligned} 
P_b(t) &= g(P_c(t),t)\\ 
P_{c}(t) &= h(P_{em}(t),\omega_{em}(t),t).
\end{aligned}
\end{equation}
Therefore, the state of charge of the battery, $E$, is given at time $t$ by
\begin{align}
E(t) = E(0) - \int_0^t P_b(t) \ \mathrm{d}t. \label{eqn_battery_dynamics}
\end{align}

The engine output power that is delivered through the clutch is combined additively with power from the motor through a coupling device to drive the gearbox. Assuming that all drivetrain components are $100\%$ mechanically efficient, the power delivered to the wheels (i.e the power demanded by the driver), $P_{drv}$, is given by
\begin{equation} \label{eqn_power_coupling}
P_{drv}(t)=P_{em}(t) + P_{eng} (t) + P_{brk}(t),
\end{equation}
where $P_{brk}$ is the power extracted from the system by the mechanical brakes. 
Assuming a discrete variable transmission, the rotational velocities of the engine and motor shafts are given as a function of the rotational velocity of the wheels, $\omega_w$, as
\begin{equation} \label{eqn_gear_shifting}
\omega_{em}(t) = r(t) \omega_{w}(t), \ \omega_{eng}(t) = \sigma(t) r(t) \omega_{w}(t)
\end{equation}
where $r: \mathbb{R} \rightarrow \{r_1,\dots,r_{N_g}\}$, and $N_g$ is the number of available gear ratios. 

The engine has upper and lower limits on torque, $\overline{T}_{eng}$ and $\underline{T}_{eng}$, that are functions of engine speed, so limits on engine power are given by
\begin{equation} \label{eqn_engine_power_limits}
\begin{aligned}
P_{eng}(t) \geq \underline{P}_{eng}(t) &= \underline{T}_{eng}(\omega_{eng}(t))\omega_{eng}(t) \\
P_{eng}(t) \leq \overline{P}_{eng}(t) &= \overline{T}_{eng}(\omega_{eng}(t))\omega_{eng}(t).
\end{aligned}
\end{equation}
The limits on motor power can be given similarly as
\begin{equation} \label{eqn_motor_power_limits}
\begin{aligned}
P_{em}(t) \geq \underline{P}_{em}(t) &= \underline{T}_{em}(\omega_{em}(t))\omega_{em}(t) \\
P_{em}(t) \leq \overline{P}_{em}(t) &= \overline{T}_{em}(\omega_{em}(t))\omega_{em}(t),
\end{aligned}
\end{equation} 
where $\overline{T}_{em}$ and $\underline{T}_{em}$ are upper and lower limits on motor torque. The engine and motor have static limits on rotational speed, given by
\begin{equation}
\underline{\omega}_{em} \leq \omega_{em}(t) \leq \overline{\omega}_{em} \quad \text{and} \quad \underline{\omega}_{eng} \leq \omega_{eng}(t) \leq \overline{\omega}_{eng},
\end{equation}
and the battery has static limits on state-of-charge and rate of charge and discharge, given by
\begin{equation} \label{eqn_battery_limits}
\begin{aligned}
\underline{E} \leq E(t) \leq \overline{E} \quad \text{and} \quad \underline{P}_b \leq P_b(t) \leq \overline{P}_b.
\end{aligned}
\end{equation}

The above system is under-constrained in three degrees of freedom: the fraction of total driver demand power delivered from motor, engine, and brakes; the engine switching and clutch engagement; and the gear selection. Consequently, the parameters $P_{eng}(t)$, $P_{brk}(t)$, $\sigma (t)$, and $r(t)$ must be actively controlled ($P_{em}(t)$ is given in terms of $P_{eng}(t)$ and $P_{brk}(t)$ by (\ref{eqn_power_coupling})). The energy management problem can therefore be written as an open-loop optimal control problem for a journey of length $T$ as 
\begin{equation} \label{eqn_energy_management_problem}
\begin{aligned} 
\min_{\sigma(t),r(t),P_{eng}(t),P_{brk}(t)} & \int_0^T \dot{m}_f(t) \ \mathrm{d}t \\
\text{s.t.} \ &  (\ref{eqn_fuel_consumption})-(\ref{eqn_battery_limits}) \ \forall  t \in [0,T]
\end{aligned}
\end{equation}
Note that $\omega_w$ and $P_{drv}$ (which determine the vehicle's speed and acceleration) are not affected in the control problem; the principle of the controller is to always meet the powertrain output demanded by the driver, and to not affect the overall driving behaviour of the vehicle. 



\subsection{MPC Framework}

If implemented in a real vehicle, the solution found from  (\ref{eqn_energy_management_problem}) will be suboptimal as it is impossible to model the powertrain components with complete accuracy, and because the problem is dependent on future disturbance variables, $P_{drv}$ and $\omega_w$, that are impossible to exactly predict \textit{a priori}. A MPC framework can be used to reduce these limitations, where instead of solving a single instance of the open-loop control problem, the control variables are repeatedly updated as the journey progresses \cite{Buerger}. This allows the predictions of driver behaviour to be improved as new information becomes available, and provides feedback on the vehicle state (i.e the battery state of charge), thus providing a degree of robustness to modelling and prediction errors. We describe a MPC framework for the energy management problem in this section.

Throughout the following text, the notation $\hat{x}$ is used for a variable, $x(t)$, to refer to the discretely sampled prediction used within the MPC framework, as opposed to the physical signals and states described in the previous section. At each control variable update instant, a discretely sampled prediction of demand power and wheel speed is made as
\begin{align*}
\hpdrv &= (\hp_{drv,0},\dots,\hp_{drv,N-1}), \ \hat{\omega}_w = (\hat{\omega}_{w,0},\dots,\hat{\omega}_{w,N-1})
\end{align*}
for $k = 0,\dots,N-1$, where the sampling period $\delta$ is assumed to be constant, and the prediction horizon is given by $N$. Although the prediction aspect of the energy management problem is still very much an open issue \cite{Zhou2019}, the focus of this paper is on the subsequent optimization problem, so it is assumed that an accurate method is available to the controller and this aspect is not addressed further. The engine and motor loss maps can be approximated with quasi-static quadratic functions, $\hat{f}$ and $\hat{h}$, \cite{Hadj-Said2016,Buerger,Nuesch2014} as
\begin{equation} \label{eqn_engine_and_motor_models}
\begin{aligned}
 &\hat{\dot{m}}_{f,k} = \hf (\hpengk,\hat{\omega}_{eng,k},\hat{\sigma}_k) \\
= & \hat{\sigma}_k [ \alpha_2(\hat{\omega}_{eng,k})\hpengk^2 + \alpha_1(\hat{\omega}_{eng,k}) \hpengk + \alpha_0(\hat{\omega}_{eng,k}) ] \\
& \hat{P}_{c,k} = \hh (\hpemk,\hat{\omega}_{em,k}) \\
= & \beta_2(\hat{\omega}_{em,k})\hpemk^2 + \beta_1(\hat{\omega}_{em,k})  \hpemk  + \beta_0(\hat{\omega}_{em,k})
\end{aligned}
\end{equation}
where $\alpha_2(\hat{\omega}_{eng,k}) > 0$ $\forall$ $\hat{\omega}_{eng,k}$, and $\beta_2(\hat{\omega}_{em,k}) > 0$ $\forall$ $\hat{\omega}_{em,k}$. The battery is commonly modelled as an equivalent circuit of internal resistance \cite{Borhan2012a,Xiang2017,Hadj-Said2016,Buerger,EastCDC2018,Nuesch2014,Elbert2014} as:
\begin{align*}
\hpbk = & \hg (\hpemk, \hat{\omega}_{em,k},k) \\
=& \frac{V_{oc,k}}{2 R_k} \left( 1 - \sqrt{1 - \frac{4R_k}{V_{oc,k}^2} \hh (\hpemk,\hat{\omega}_{em,k}) } \right)
\end{align*}
where $V_{oc,k}$ and $R_k$ are the open circuit voltage and internal resistance. The limits on engine, motor, and discharge power (\ref{eqn_engine_power_limits},\ref{eqn_motor_power_limits}) are now given in discrete time as
\begin{equation*} 
\begin{aligned}
\upengk &=  \underline{T}_{eng}(\hat{\omega}_{eng,k}) \hat{\omega}_{eng,k} \\
\overline{P}_{eng,k} &=  \overline{T}_{eng}(\hat{\omega}_{eng,k}) \hat{\omega}_{eng,k} \\
\upemk &= \underline{T}_{em}(\hat{\omega}_{em,k})\hat{\omega}_{em,k} \\
\overline{P}_{em,k} &= \overline{T}_{em}(\hat{\omega}_{em,k})\hat{\omega}_{em,k} \\
\end{aligned}
\end{equation*}
and the MPC optimization at time $t$ is then given as an approximation of (\ref{eqn_energy_management_problem}) with Euler method integration of the state dynamics (\ref{eqn_battery_dynamics}) as 
\begin{equation} \label{eqn_MPC_problem}
\begin{aligned} 
\min_{\hat{\sigma},\hat{r},\hpeng,\hat{P}_{brk}} & \sum_{k=0}^{N-1} \delta{ \hat{\dot{m}}_{f,k} } \\
\text{s.t.} \ & \ \hat{E}_0 = E(t) \\
&  \begin{rcases} \hE_{k+1} = \hEk - \delta \hg(\hpemk,\hat{\omega}_{em,k},k) \\
 \hpdrvk = \hpemk + \hpengk + \hat{P}_{brk,k}\\
 \hat{\omega}_{em,k} = \hat{r}_k \hat{\omega}_{w,k} \\
 \hat{\omega}_{eng,k} = \hat{\sigma}_k\hat{r}_k\hat{\omega}_{w,k} \\
 \hat{r}_k \in \{r_1,\dots,r_{N_g}\} \\
 \hat{\sigma}_k \in \{0,1\} \\
 \upengk \leq \hpengk \leq \overline{P}_{eng,k} \\
 \upemk \leq \hpemk \leq \overline{P}_{em,k} \\
 \underline{\omega}_{em} \leq \hat{\omega}_{em,k} \leq \overline{\omega}_{em}  \\ 
 \underline{\omega}_{eng} \leq \hat{\omega}_{eng,k} \leq \overline{\omega}_{eng}  \\
   \underline{P}_{b} \leq \hat{P}_{b,k} \leq \overline{P}_{b} \\
 \underline{E} \leq \hE_{k+1} \leq \overline{E} \\
\end{rcases} \forall k
\end{aligned}
\end{equation}
At each control variable update instance, the first elements of the vectors of optimization variables, $\hat{\sigma}^\star_0$, $\hat{r}^\star_0$, $\hat{P}_{eng,0}^\star$, and $\hat{P}_{brk,0}^\star$ are implemented as $\sigma(t)$, $r(t)$, $P_{eng}(t)$, and $P_{brk}(t)$, where $\hat{\sigma}^\star$, $\hat{r}^\star$, $\hat{P}_{eng}^\star$, and $\hat{P}_{brk}^\star$ are the minimizing arguments of (\ref{eqn_MPC_problem}). 


\subsection{Convex Reformulation}\label{section_convex_reformulation}

Whilst the MPC framework provides a degree of robustness to prediction and modelling errors, problem (\ref{eqn_MPC_problem}) is challenging to solve as the cost function is non-convex, it is subject to nonlinear, non-convex constraints, and it has $2N$ discrete decision variables ($N$ is likely to be in the thousands for journeys longer than 15 mins, assuming an update frequency of approximately 1Hz). The global minimum of the problem can be found using Dynamic Programming, however this is a computationally demanding approach and not suitable for an online solution \cite{EastCDC2018}. Instead, it is possible to reduce the complexity of the problem by determining the three variables $\hat{\sigma}$, $\hat{r}$, and $\hat{P}_{brk}$ using heuristic rules \cite{Buerger} or an external optimization routine \cite{Hadj-Said2016}, leaving the power balance between the engine and motor as the only under-constrained parameter. It has previously been demonstrated in \cite{EastCDC2018} that the resulting problem is convex when using the battery power, $\hpb$, as the decision variable. This is the approach that is taken in this paper, as discussed below.

Assuming an external method for estimating the engine switching behaviour, the set of timesteps at which the engine is switched on and the clutch is engaged is given by
\begin{equation*}
\p = \{k : \hat{\sigma}(k) = 1 \},
\end{equation*}
and also assuming a method for pre-determining the use of mechanical brake, we re-define the drive demand power from (\ref{eqn_power_coupling}) as
\begin{equation*}
\hp_{drv,k} = \hp_{em,k} + \hp_{eng,k},
\end{equation*}
so that $P_{brk,k}$ is no longer an optimization variable in (\ref{eqn_MPC_problem}). Finally, by assuming that the gear selection, $\hat{r}$, is also pre-determined, $\hat{\omega}_{eng,k}$ and $\hat{\omega}_{em,k}$ are defined for all $k$, so the engine and motor models (\ref{eqn_engine_and_motor_models}) can be reduced to time-varying polynomial functions of the form
\begin{align*}
\hat{\dot{m}}_f(k) & = \hfk(\hpengk) \\
& = \ak2\hpengk^2 + \ak1\hpengk + \ak0, \\
\hat{P}_c(k) & = \hhk (\hpemk) \\
& = \bk2 \hpemk^2 + \bk1\hpemk + \bk0.
\end{align*}
These functions are strictly convex ($\ak2,\bk2 > 0$), and we then ensure that $\hgk$ is also strictly convex by assuming that $V_{oc}$ and $R$ are independent of state of charge (and in this case, constant), so that the battery model is approximated by
\begin{align*}
& \hpbk  = \hgk(\hpemk) = \frac{V^2_{oc}}{2 R} \left( 1 - \sqrt{1 - \frac{4R}{V_{oc}^2} \hhk(\hpemk ) } \right).
\end{align*}
We update the limits on $\hpengk$ and $\hpemk$ to ensure that  $\hat{f}_k$, $\hat{h}_k$, and $\hat{g}_k$ are all non-decreasing and real-valued as
\begin{align*}
\upengk & = \max  \left\{ \upengk, -\frac{\ak1}{2 \ak2} \right\}\\
\upemk & = \max \left\{ \upemk, -\frac{\bk1}{2 \bk2} \right\} \\
\opemk & = \min\bigg\{  \opemk, \max\left\{ x : 1 - \frac{4R}{V_{oc}^2} \hhk(x ) = 0 \right\} \bigg\}.
\end{align*}
This also ensures that $\hgk$ is a one-to-one function, so we can define
\begin{equation*}
\hpbk = \hgk (\hpemk) \Leftrightarrow \hpemk = \hgk^{-1} (\hpbk)
\end{equation*}
where
\begin{align*}
&\hgkinv(\hpbk ) = -\frac{\bk1}{2\bk2} + \sqrt{-\frac{R\hpbk^2}{\bk2 V_{oc}^2} + \frac{\hpbk - \bk0}{\bk2} + \frac{\bk1^2}{4\bk2^2} }
\end{align*}
Using this definition of $\hgkinv$, it is known that for $k \in \p$,
\begin{equation*}
\hpengk = \hpdrvk - \hgkinv (\hpbk)
\end{equation*}
where the corresponding limits on $\hpb$ are
\begin{align*}
\opbk &= \min \{  \opb, \hgk(\opemk), \hgk (\hpdrvk - \upengk ) \} \\
\upbk &= \max \{\upb ,\hgk(\upemk), \hgk (\hpdrvk - \opengk ) \}
\end{align*}
and it is known that for $k \notin \p$
\begin{align*}
\hfk(\hpengk) = &0, \quad \opbk = \upbk = \hgk(\hpdrv(k)).
\end{align*}

In \cite{EastCDC2018}, it is shown that given the properties of $\hgk(\cdot)$ (strictly convex, twice differentiable, non-decreasing, one-to-one) and $\hfk$ (convex and non-decreasing), the function $\hfk(\hpdrvk -\hgkinv (\hpbk))$ is convex, non-increasing, and twice differentiable, so the MPC problem (\ref{eqn_MPC_problem}) becomes the convex, linearly constrained optimization problem
\begin{equation} \label{eqn_convex_problem}
\begin{aligned} 
\min_{\hpb} & \sum_{k \in \p} \delta\hfk(\hpdrvk - \hgkinv(\hpbk)) \\
\text{s.t.} \ & \hE_0 = E(t) \\
& \begin{rcases} \hE_{k+1} = \hE_k - \delta \hpbk \\
\underline{E} \leq \hE_{k+1} \leq \overline{E} \end{rcases} k = 0,\dots,N-1 \\
& \upbk \leq \hpbk \leq \opbk  && k \in \p \\ 
& \upbk = \hpbk = \opbk  && k \notin \p. \\ 
 \end{aligned}
\end{equation}

For the sake of clarity in the following sections, we now revert to the commonly used notation for MPC problems, where $u$ is the control input (the predicted battery power, $\hpb$), and $x$ is the state variable (the predicted state of charge, $\hE$), i.e
\begin{align*}
u  &:=\hpb \in \mathbb{R}^N, & \ou  &:= \upb \in \mathbb{R}^N, & \uu &:= \upb \in \mathbb{R}^N \\
x &:= \hE \in \mathbb{R}^N, &  \ox &:= \overline{E} \in \mathbb{R}, & \ux &:= \underline{E} \in \mathbb{R}.
\end{align*}
Then, by defining the non-increasing, separable, strictly convex cost function
\begin{align*}
F(u) = \sum_{k \in \p} \delta\hfk(\hpdrvk - \hgkinv(u_k)),
\end{align*}
we can express (\ref{eqn_convex_problem}) equivalently as
\begin{equation} \label{eqn_convex_problem_new}
\begin{aligned} 
\min_{u} \ & F(u) \\
\text{s.t.} \ & x = \Phi x_0 - \Psi u \\
&\Phi \ux \leq x \leq \Phi \ox && \\
&  \uu \leq u \leq \ou, \\ 
 \end{aligned}
\end{equation}
where $\Phi$ is a vector of $N$ ones, and $\Psi$ is an $N \times N$ lower triangular matrix where every non-zero element is equal to $\delta$. 

If we describe $\F_k$ as the feasible set of state of charge values at timestep $k$, then problem (\ref{eqn_convex_problem}) is feasible if and only if $\F_k \neq \emptyset$ for all $k = 0,\dots,N$, where $\F_{k}$ is defined recursively by
\begin{align*}
\F_{k+1} &= \{x_k - \delta u_k \in \X :x_k \in \F_k, \ u_k \in \U_k \}\\
&=  \{ \F_k \oplus -\delta \U_k \} \cap \X,
\end{align*}
and
\begin{align*}
\U_k &= \{u_k: \uu_k \leq u_k \leq \ou_k \} \\
\X &= \{x_k: \underline{x} \leq x_k \leq \overline{x} \}.
\end{align*}
As $\F_k$ is a one-dimensional convex set, it can be parameterized by $\F_k = [\min \F_k, \max \F_k ]$, and the sequence $\F_{1}, \dots, \F_{N}$ can be obtained iteratively from 
\begin{equation}\label{feasability_check}
\begin{aligned}
\max \F_{k+1} &= \min\{\overline{x}, \max \F_k - \delta \uu_k \} \\
\min \F_{k+1} &= \max\{\underline{x}, \min \F_k - \delta \ou_k \}
\end{aligned}
\end{equation} 
with $\F_0=\{x_0\}$. The problem is therefore feasible if and only if $\max \F_{k+1}$ and $\min \F_{k+1}$ exist (i.e. $\max \F_{k+1} \geq \min \F_{k+1}$ when calculated from (\ref{feasability_check}), and $\overline{u}_k \geq \underline{u}_k$) for $k \in \{0,\dots,N-1\}$. 

The cost function in (\ref{eqn_convex_problem_new}) is known to be non-increasing in $u$, so by inspection, if 
\begin{equation*}
\Phi \underline{x} \leq \Phi x_0 - \Psi \overline{u} \leq \Phi \overline{x},
\end{equation*}
then $\overline{u}$ is the minimizing argument.

\section{Projected Interior Point Method}\label{section_interior_point}
Problem (\ref{eqn_convex_problem_new}) is in the form of a convex nonlinear program with affine equality and inequality constraints. Here we present a projected primal-dual interior point algorithm, where the element-wise bounds on the control variable are applied as a projection. This section begins with the definition of the barrier approximation and optimality conditions, followed by a statement of the initialization algorithm and main projected interior point algorithm with accompanying pseudocode. The section is then is then concluded with an analysis of the computational complexity of each iteration, and a discussion of convergence to the minimizing argument of (\ref{eqn_convex_problem_new}).

Firstly, we introduce a slack variable $s \in \mathbb{R}^{2N}$, and write problem (\ref{eqn_convex_problem_new}) equivalently as\begin{equation} \label{eqn_ip_problem_1}
\begin{aligned} 
\min_{u} \ & F(u) \\
\text{s.t.} \ & A u - b - s = 0 \\ 
& s \geq 0 \\
& \uu \leq u \leq \ou, \\ 
 \end{aligned} 
\end{equation}
where
\begin{align*}
A = \begin{bmatrix} \Psi \\ - \Psi \end{bmatrix} ,  \ \text{and} \ \ b = \begin{bmatrix} \Phi(x_0 - \ox) \\ - \Phi(x_0 - \ux)  \end{bmatrix}.
\end{align*}
This can then be approximated with 
\begin{equation} \label{eqn_ip_problem_2}
\begin{aligned} 
\min_{u,s} \ & F(u) + B(s,\mu ) \\
\text{s.t.} \ & A u - b - s = 0 \\ 
& \uu \leq u \leq \ou, \\ 
 \end{aligned} 
\end{equation}
where $B(s,  \mu )$ is a log barrier function defined by
\begin{align*}
B(s, \mu ) = -\frac{1}{\mu} \sum_{k=1}^{2N} \log s_k,
\end{align*}
and $\mu>0$ can be interpreted as the degree to which the log barrier function approximates the inequality constraint, $s\geq 0$. The Lagrangian function associated with problem (\ref{eqn_ip_problem_2}) is
\begin{multline*}
\mathcal{L}(u,s,\theta_1,\theta_2,\theta_3, \mu) = F(u) + B(s, \mu ) - \theta_1^\top(Au - b - s)\\  - \theta_2^\top (u - \uu) - \theta_3^\top (\ou - u)
\end{multline*}
where $\theta_1\in\mathbb{R}^{2N}$, $\theta_2 \in \mathbb{R}^N$, and $\theta_3 \in \mathbb{R}^N$. By defining the set
\begin{align*}
\A^\con = \{ & k : u^\con_k = \uu_k,  \nabla_k  F(u^\con) - A_k^\top \theta_1^\con > 0, \\
& \text{or}  \ u^\con_k = \ou_k, \ \nabla_k  F(u^\con) - A_k^\top \theta_1^\con < 0 \},
\end{align*}
the necessary and sufficient conditions for the optimal values $u^\con,s^\con$, and $\theta_1^\con$ that minimize (\ref{eqn_ip_problem_2}) are
\begin{subequations}\label{KKT2}
\begin{align}
\nabla_k F(u^\con) - A_k^\top \theta_1^\con & = 0, \quad k \notin \A^\con, \label{KKT2_a}\\
S^\con \theta_1^\con & = \frac{1}{\mu}\textbf{1}, \label{KKT2_b} \\
Au^\con - b - s^\con &= 0, \label{KKT2_c}\\
u^\con - \uu &\geq 0,  \label{KKT2_d}\\
\ou - u^\con & \geq 0,  \label{KKT2_f}\\
s^\con &> 0, \label{KKT2_g}\\
\theta_1^\con &\geq 0, \label{KKT2_h}
\end{align}
\end{subequations}
where $\nabla_kF(u)$ is the $k$th row of the gradient of $F$ at $u$, $A_k$ is the $k$th column of $A$, and $S = \text{diag}(s)$.

It can be demonstrated that to obtain the optimality conditions of  (\ref{eqn_ip_problem_1}), two changes must be made to (\ref{KKT2}): firstly, (\ref{KKT2_g}) must become a non-strict inequality condition, and secondly, a vector of zeros must replace $\frac{1}{\mu}\textbf{1}$ in the R.H.S of (\ref{KKT2_b}). Therefore, it can be concluded that $u^\con$ converges asymptotically to $u^\star$ as $\mu \to \infty$, where $u^\star$ is the minimizing argument of (\ref{eqn_ip_problem_1}).
The principle of the algorithm presented in this section is that conditions (\ref{KKT2_d}-\ref{KKT2_h}) hold at all iterations, whilst a projected Newton method \cite{Optimization1982} is used to obtain an approximation of the solutions of (\ref{KKT2_a}-\ref{KKT2_c}) for a fixed value of $\mu$. This is then repeated for progressively larger values of $\mu$, with progressively higher accuracy, to obtain $u^\star$. 

\subsection{Initialization}

The projected interior point algorithm is initialized using Algorithm \ref{algorithm_intialisation} to obtain the tube defined by the sequence $ \G_0, \dots, \G_N$ where 
\begin{align*}
\G_k &= \{x_{k+1} + \delta u_k \in \F_k : x_{k+1} \in \G_{k+1}, u_k \in \U_k\} \\
&= \F_k \cap \left\{ \G_{k+1} \oplus \delta \U_k \right\} .
\end{align*}
The centerline of this tube is then used to obtain values for $u^{(0)}$, $s^{(0)}$, and $\theta_1^{(0)}$ that satisfy (\ref{KKT2_b}-\ref{KKT2_h}).

\begin{algorithm}
\caption{Initialization Algorithm\label{algorithm_intialisation}}
\begin{algorithmic}[1]
\STATE Set $\max \F_0= \min \F_0=x_0$
\FOR {$k = 0,\dots,N-1$}
\STATE $\max \F_{k+1} = \min\{\overline{x}, \max \F_k - \delta \uu_k \}$
\STATE $\min \F_{k+1} = \max\{\underline{x}, \min \F_k - \delta \ou_k \}$
\ENDFOR
\STATE $\G_N$ = $\F_N$
\STATE $x_N^{(0)} = \frac{1}{2}(\max \G_N + \min \G_N )$
\FOR {$k = N-1,\dots,0$}
\STATE $\max \G_k = \min\{\max \G_{k+1} + \delta \ou_k, \max \F_k \}$
\STATE $\min \G_k = \max\{\min \G_{k+1} + \delta \uu_k, \min \F_{k} \}$
\STATE $x^{(0)}_k = \frac{1}{2}(\max \G_k + \min \G_k)$
\STATE $u^{(0)}_k = \frac{1}{\delta} ( x^{(0)}_k - x^{(0)}_{k+1}).$
\ENDFOR
\STATE $s^{(0)} = A u^{(0)} - b$
\STATE $\theta_1^{(0)} = \frac{1}{\mu}(S^{(0)})^{-1}\textbf{1}$
\end{algorithmic}
\end{algorithm}

\begin{proposition}
The values of $u^{(0)}$, $s^{(0)}$, and $\theta_1^{(0)}$ obtained with Algorithm \ref{algorithm_intialisation} will satisfy conditions (\ref{KKT2_b}-\ref{KKT2_h}), iff
\begin{equation}\label{new_init_cond}
\begin{aligned}
\max \F_k &> \min \X \quad \text{and} \\
\min \F_k &< \max \X
\end{aligned}
\end{equation}
for $k \in {1,\dots,N}$.
\end{proposition}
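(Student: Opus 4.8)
The plan is to reduce the seven conditions (\ref{KKT2_b})--(\ref{KKT2_h}) to a single interiority requirement on the tube centreline, and then match that requirement against (\ref{new_init_cond}). By the last two lines of Algorithm \ref{algorithm_intialisation}, $s^{(0)} = A u^{(0)} - b$ and $\theta_1^{(0)} = \tfrac{1}{\mu}(S^{(0)})^{-1}\mathbf{1}$, so (\ref{KKT2_c}) holds by construction, and whenever $s^{(0)} > 0$ we also obtain $S^{(0)}\theta_1^{(0)} = \tfrac{1}{\mu}\mathbf{1}$ and $\theta_1^{(0)} = \tfrac{1}{\mu}(S^{(0)})^{-1}\mathbf{1} > 0$; conversely, if some entry of $s^{(0)}$ is non-positive then $\theta_1^{(0)}$ is undefined or violates (\ref{KKT2_h}). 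Hence (\ref{KKT2_b}), (\ref{KKT2_g}), (\ref{KKT2_h}) hold \emph{iff} $s^{(0)} > 0$. Substituting $\Psi u^{(0)} = \Phi x_0 - x^{(0)}$ into $s^{(0)} = A u^{(0)} - b$, with $A$ and $b$ as defined, yields the two blocks $\ox - x^{(0)}_k$ and $x^{(0)}_k - \ux$, so $s^{(0)} > 0$ is equivalent to $\ux < x^{(0)}_k < \ox$ for $k = 1,\dots,N$. Writing $M_k = \max\G_k$, $m_k = \min\G_k$, the centreline value $x^{(0)}_k = \tfrac{1}{2}(M_k + m_k)$ is the midpoint of $\G_k = [m_k, M_k] \subseteq \X$, so (using $m_k \ge \ux$, $M_k \le \ox$) interiority is equivalent to $M_k > \ux$ and $m_k < \ox$ for every $k$.

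I would next dispatch (\ref{KKT2_d}) and (\ref{KKT2_f}), which I expect to hold independently of (\ref{new_init_cond}) as long as the problem is feasible (so each $\G_k$ is non-empty and the centreline is well defined). Writing $M_k^F = \max\F_k$, $m_k^F = \min\F_k$ and using the midpoint identity,
\begin{equation*}
x^{(0)}_k - x^{(0)}_{k+1} = \tfrac{1}{2}\big[(M_k - M_{k+1}) + (m_k - m_{k+1})\big].
\end{equation*}
The backward recursion gives $M_k - M_{k+1} = \min\{\delta\ou_k,\, M_k^F - M_{k+1}\} \le \delta\ou_k$ and $m_k - m_{k+1} = \max\{\delta\uu_k,\, m_k^F - m_{k+1}\} \ge \delta\uu_k$, while the forward recursion together with $\G_{k+1}\subseteq\F_{k+1}$ gives $m_{k+1} \ge m_{k+1}^F \ge m_k^F - \delta\ou_k$ and $M_{k+1} \le M_{k+1}^F \le M_k^F - \delta\uu_k$. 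Combining these bounds shows both bracketed terms lie in $[\delta\uu_k,\delta\ou_k]$, whence $\uu_k \le u^{(0)}_k \le \ou_k$, i.e. (\ref{KKT2_d}) and (\ref{KKT2_f}).

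It then remains to prove the equivalence $[\,M_k > \ux\text{ and }m_k < \ox\ \forall k\,] \Leftrightarrow (\ref{new_init_cond})$. The forward implication is immediate from $M_k \le M_k^F$, $m_k \ge m_k^F$. For the converse I would concentrate on $M_k > \ux$, the claim $m_k < \ox$ following by the symmetry $x \mapsto -x$, $u \mapsto -u$, which preserves the dynamics and swaps the two inequalities. Unrolling the backward recursion yields the closed form $M_k = \min_{k \le j \le N}\{\,M_j^F + \delta\sum_{i=k}^{j-1}\ou_i\,\}$, and telescoping the forward recursion for $m^F$ gives $\delta\sum_{i=k}^{j-1}\ou_i \ge m_k^F - m_j^F$; together with $M_j^F \ge m_j^F$ and $m_k^F \ge \ux$ this already forces each term, and hence $M_k$, to be $\ge \ux$.

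The main obstacle is upgrading this to a \emph{strict} inequality, and this is exactly where (\ref{new_init_cond}) does its work. Equality $M_j^F + \delta\sum_{i=k}^{j-1}\ou_i = \ux$ can hold only if simultaneously $M_j^F = m_j^F$ (so $\F_j$ is a single point), $m_k^F = \ux$, and each forward step $m_l^F = m_{l-1}^F - \delta\ou_{l-1}$ is unclamped for $l = k+1,\dots,j$. I would then show this configuration contradicts (\ref{new_init_cond}): feeding $M_j^F = m_j^F$ into the forward recursion for $M^F$ excludes the cap $\ox$ because $M_j^F = m_j^F < \ox$, forcing $M_j^F = M_{j-1}^F - \delta\uu_{j-1}$; subtracting the unclamped $m^F$ relation and using $\uu_{j-1} \le \ou_{j-1}$ forces $M_{j-1}^F = m_{j-1}^F$ as well, and iterating downward propagates the single-point property all the way to $\F_k$, giving $M_k^F = m_k^F = \ux$ and contradicting $M_k^F > \ux$. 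Hence the inequality is strict, $M_k > \ux$ for all $k$, and the proof closes. The delicate part is precisely this propagation argument: every individual bound is only non-strict, and strictness is recovered globally from the feasibility relation $\uu_i \le \ou_i$ combined with the two-sided non-degeneracy assumed in (\ref{new_init_cond}).
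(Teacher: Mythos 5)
Your proof is correct, and while it shares the paper's overall skeleton---reduce conditions (\ref{KKT2_b}), (\ref{KKT2_c}), (\ref{KKT2_g}), (\ref{KKT2_h}) to interiority of the tube centreline $x^{(0)}_k \in \mathrm{int}(\X)$, and dispose of (\ref{KKT2_d})--(\ref{KKT2_f}) separately---it establishes the key interiority step by a genuinely different mechanism. The paper argues via the \emph{width} of the sets: it introduces the first index $k^\dagger$ at which $\F_k$ touches $\partial\X$, shows $|\F_k|>0$ from there onward using (\ref{new_init_cond}), propagates positive width backward to $|\G_k|>0$, and splits into the cases $k<k^\dagger$ (where $\F_k\subset\mathrm{int}(\X)$ suffices even if $\G_k$ is a point) and $k\ge k^\dagger$ (where $|\G_k|>0$ places the midpoint strictly inside). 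You instead unroll the backward recursion into the closed form $\max\G_k=\min_{k\le j\le N}\{\max\F_j+\delta\sum_{i=k}^{j-1}\ou_i\}$, bound each term below by $\min\F_k\ge\ux$ via telescoping, and recover \emph{strictness} by showing that equality would force the rigidity chain $\max\F_l=\min\F_l$, $\ou_{l-1}=\uu_{l-1}$ to propagate down to $\max\F_k=\min\F_k=\ux$, contradicting (\ref{new_init_cond}). This buys you an explicit, checkable argument precisely where the paper is most terse (its ``it can also be shown that $\max u^{(0)}_k=\ou_k$'' and ``a subset of the above results are shown to be not true'' are both replaced by concrete derivations: your interval-arithmetic bound $x^{(0)}_k-x^{(0)}_{k+1}\in[\delta\uu_k,\delta\ou_k]$ for the control limits, and the contrapositive through $\max\G_k\le\max\F_k$, $\min\G_k\ge\min\F_k$ for the ``only if'' direction). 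The cost is that the equality-propagation step is more delicate than the paper's width argument and relies on every clamp in the recursions being identified correctly; you have done so. Both proofs implicitly assume feasibility (so that each $\G_k$ is non-empty), which the paper also treats as given since (\ref{new_init_cond}) presupposes that $\max\F_k$ and $\min\F_k$ exist.
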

\begin{proof}
By induction. We start by defining the operation
$$
|\mathcal{S}| = \max \mathcal{S} - \min \mathcal{S}
$$
for any given set $\mathcal{S}$, and note that for $\max \F_k$ and $\min \F_k$ to exist in (\ref{new_init_cond}) it is implied that the problem is feasible, so $|\F_k| \geq 0$ for $k \in \{1, \dots ,N \}$. Let $k^\dagger$ be the smallest value of $k$ such that $\max \F_{k} = \max \X$ and/or $\min \F_{k} = \min \X$, and suppose that $k^\dagger \leq N$. In this case we can use (\ref{new_init_cond}) to show that $\max \F_k > \min \F_k$ (i.e. $ |\F_k|>0$). Furthermore, we know that $|\F_{k+1}| \geq |\F_k|$ for any $k$ where $\max \F_{k+1} \neq \max \X$, $\min \F_{k+1} \neq \min \X$, and $|\U_k| \geq 0$. Therefore $|\F_{k}|>0$ for all $k\geq k^\dagger$. If we assume that $|\F_k|>0$, and $|\G_{k+1}|>0$, we can then show that
\begin{align}
\left| \F_{k+1} \right| > 0 \quad \Leftrightarrow \quad & \left|  \{ \F_k \oplus - \delta \U_k \} \cap \X \right| > 0 \nonumber \\
\Rightarrow \quad & \left|  \F_k \cap \{ \F_{k+1} \oplus \delta \U_k \}  \right| > 0 \nonumber \\
\Rightarrow \quad & \left|  \F_k \cap \{ \G_{k+1} \oplus \delta \U_k \} \right| > 0  \nonumber \\
\Rightarrow \quad & \left| \G_k \right| >0. \label{feas_proof}
\end{align} 
In the case where $k^\dagger$ exists we know that $|\F_N|>0$, so $|\G_N|>0$, and the argument in (\ref{feas_proof}) can then be made recursively to show that $|\G_k|>0$ for $k = N-1,\dots,k^\dagger$. As $\G_k \subseteq \F_k \subseteq \X$, we now know that that $x_k^{(0)} \in \text{int} (\X)$ for $k \in \{k^\dagger,\dots,N\}$. 

For $k \in \{1,\dots,k^\dagger-1\}$ we know that $|\F_k| \geq 0$ from feasibility, and using a similar method to (\ref{feas_proof}) we can then show that $|\F_{k+1}| \geq 0 \Rightarrow |\G_k| \geq 0$. We also know that $\max \F_k \neq \max \X$ and $\min \F_k \neq \min \X$ for $k \in \{1,\dots,k^\dagger-1\}$ (from the definition of $k^\dagger$), so as $\G_k \subseteq \F_k \subset \X$, it follows that $x_k^{(0)} \in \text{int} (\X)$ for $k \in \{1,\dots,k^\dagger-1\}$. A similar result can be shown for $k \in \{1,\dots,N\}$ if $k^\dagger$ does not exist. 

The condition $x_k^{(0)} \in \text{int} (\X)$ for $k \in \{1,\dots,N\}$ ensures that $Au^{(0)} - b > 0$, so $s^{(0)} = A u^{(0)} - b$ ensures (\ref{KKT2_c}) and (\ref{KKT2_g}), and $\theta_1^{(0)} = \frac{1}{\mu}(S^{(0)})^{-1}\textbf{1}$ ensures (\ref{KKT2_b}) and (\ref{KKT2_h}). Finally, it can also be shown that 
\begin{align*}
&\max u_k^{(0)} \\
 =& \max \frac{1}{2 \delta} \left\{ \max \G_k + \min \G_k - \max \G_{k+1} - \min \G_{k+1} \right\} \\
 =& \overline{u}_k
\end{align*}
and we can similarly show that $\min u_k^{(0)} = \underline{u}_k$, which therefore demonstrates (\ref{KKT2_d}-\ref{KKT2_f}) if (\ref{new_init_cond}) is true for $k \in \{1,\dots,N\}$. 

Conversely, a subset of the above results are shown to be not true if (\ref{new_init_cond}) is not true for some $k \in \{1,\dots,N\}$.
\end{proof}


An example of the $x^{(0)}$ and $u^{(0)}$ values obtained by the initialization algorithm for a nominal, randomly generated example is shown in Fig. \ref{fig_tube}, demonstrating that $\underline{u}_k \leq u_k^{(0)} \leq \overline{u}_k \ \forall k$, and that $\underline{x} < x^{(0)}_k < \overline{x} \ \forall k$.

\begin{figure}
\includegraphics[scale=1]{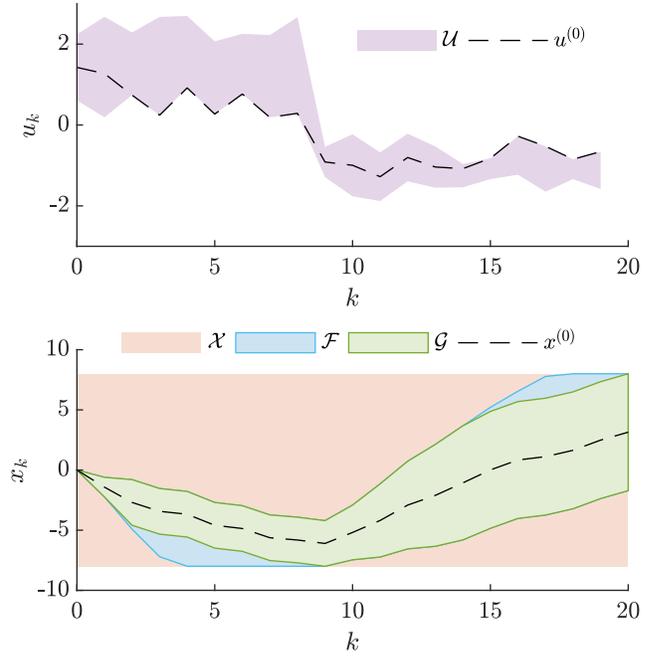}
\caption{An example of the tubes defined by $\F$ and $\G$, and the solutions obtained for $x^{(0)}$ and $u^{(0)}$ using Algorithm \ref{algorithm_intialisation} for a nominal system (the values were chosen to illustrate the operation of the algorithm, and are not neccessarily representative of those observed in the energy management problem). \label{fig_tube}}
\end{figure}

In addition to $u$, $s$, and $\theta_1$, there are three further parameters that are initialized at the start of the algorithm: $\mu_0>0$, $\overline{\mu} \geq \mu_0$, $k_\mu>1$, and $\tau \in (0,1)$. These parameters can be assigned any value within the stated ranges, and their significance is discussed in the following subsection.

\subsection{Algorithm}\label{section_algorithm}

At each iteration, $j$, the elements $k$ are partitioned into the sets
\begin{alignat*}{2}
\A^{(j)} & = \{ && k : u^{(j)}_k = \uu_k, \nabla_k F(u^{(j)}) - A_k^\top \theta_1^{(j)} > 0, \\
& && \text{or}  \ u^{(j)}_k = \ou_k, \ \nabla_k F(u^{(j)}) - A_k^\top \theta_1^{(j)} < 0 \} \\
\mathcal{D}^{(j)} & = \{ && k : k \notin \mathcal{A}^{(j)} \},
\end{alignat*}
then an estimate is made of the solution to the equations (\ref{KKT2_a}-\ref{KKT2_c}) using a projected Newton method. Let
\begin{multline}\label{linear_system_of_equations}
\begin{bmatrix} \nabla^2_{ \mathcal{D} } F(u^{(j)}) & 0 & -w^\top  \\
0 & \Theta_1^{(j)} & S^{(j)} \\
 w & -I & 0  \end{bmatrix} \begin{bmatrix} \Delta \tilde{u}^{(j)} \\ \Delta {s}^{(j)} \\ \Delta \theta_1^{(j)}  \end{bmatrix} \\
 = \begin{bmatrix} - \nabla_{ \mathcal{D} } F(u^{(j)}) + v^\top \theta_1^{(j)}  \\
\frac{1}{\mu}\textbf{1} - S^{(j)} \theta_1^{(j)} \\
-A u^{(j)} + b + s^{(j)}  \end{bmatrix}
\end{multline}
where $\nabla^2_{ \mathcal{D} } F(u^{(j)})$ is the $k \in { \mathcal{D}^{(j)} }$ rows and columns of the Hessian of $F$ evaluated at $u^{(j)}$, $\nabla_{ \mathcal{D} } F(u^{(j)})$ is the $k \in { \mathcal{D}^{(j)} }$ elements of the gradient of $F$ evaluated at $u^{(j)}$, and $\Theta = \text{diag}(\theta)$. We define $\hat{A}$ as the matrix $A$ with the $k \in \{k: \uu_k = u^{(j)}_k \ \text{or} \ \ou_k = u^{(j)}_k \}$ columns set to zero, so that $v$ is the $k\in \mathcal{D}^{(j)}$ columns of $A$, and $w$ is the $k\in \mathcal{D}^{(j)}$ columns of $\hat{A}$. The search directions $\Delta \tilde{u}^{(j)}$, $\Delta \tilde{s}^{(j)}$, and $\Delta \theta_1^{(j)}$ are obtained from the reduced equations
\begin{subequations}\label{linear_system_solutions}
\begin{align}
\Delta \theta_1^{(j)} =& \left( w  \left(\nabla^2_{ \mathcal{D}} F (u^{(j)}) \right)^{-1}   w^\top + (\Theta_1^{(j)})^{-1}S^{(j)}\right)^{-1}  \nonumber \\
& \bigg(\frac{1}{\mu}(\Theta_1^{(j)})^{-1} \textbf{1} -  Au^{(j)} + b \nonumber \\
& -  w  \left(\nabla^2_{ \mathcal{D}} F (u^{(j)}) \right)^{-1} (-\nabla F_{  \mathcal{D}}(u^{(j)}) + v^\top \theta_1^{(j)}) \bigg) \label{linear_system_solution1}\\
\Delta \tilde{u}^{(j)} = & -\big( \nabla^2 F_{ \mathcal{D}}(u^{(j)}) \big)^{-1} \nonumber \\
&(\nabla_{ \mathcal{D}} F(u^{(j)}) - v^\top \theta_1^{(j)} - w^\top \Delta \theta_1 ), \label{linear_system_solution2} \\
\Delta s^{(j)} = & A u^{(j)} + w \Delta \tilde{u}^{(j)} - b - s^{(j)}, \label{linear_system_solution3}
\end{align}
\end{subequations}
and search lengths $\alpha_s$ and $\alpha_\theta$ are the determined from the `fraction to the boundary' rule, {\cite[pp.~567]{NumericalOptimization}} as
\begin{subequations} \label{fraction_to_the_boundary}
\begin{align}
\alpha_s & = \max \{ \alpha \in (0,1]:s^{(j)} + \alpha \Delta s_1^{(j)} \geq (1 - \tau) s^{(j)} \}, \label{alpha_s} \\ 
\alpha_\theta & = \max \{ \alpha \in (0,1]:\theta_1^{(j)} + \alpha \Delta \theta_1^{(j)} \geq (1 - \tau) \theta_1^{(j)} \}, \label{alpha_theta}
\end{align}
\end{subequations}
where $\tau \in (0,1)$ is fixed and arbitrary. The variables $u$, $s$, and $\theta_1$ are then updated as
\begin{subequations} \label{variable_updates}
\begin{align} 
u^{(j+1)}_k &= \begin{cases} \pi^u_k\left[u^{(j)}_k + \alpha_s \Delta \tilde{u}^{(j)}_{i(k)}\right] & k \in \mathcal{D}^{(j)} \\
0 &   k \in \A^{(j)} \end{cases}, \ \label{u_update}\\
s^{(j+1)} &= s^{(j)} + \alpha_s \Delta \tilde{s}, \label{s_update} \\
\theta_1^{(j+1)} &= \theta_1^{(j)} + \alpha_\theta \Delta \theta_1, \label{theta_update}
\end{align}
\end{subequations}
where $\pi_k^u(u_k) = \min \{ \overline{u}_k, \max \{ \underline{u}_k,u \} \}$, and $i(k)$ is a function that returns the index of the element of $\mathcal{D}^{(j)}$ that is equal to $k$, assuming that the elements are ordered in a chronological, increasing sequence (e.g $i(7)=2$ if $ \mathcal{D}^{(j)}  = \{ 1,7,12,\dots\}$). This iteration is performed repeatedly until the criterion
\begin{multline}\label{termination_criteria}
r_{\text{IP}}^{(j)} = \max \Big\{ \|\nabla_{ \mathcal{D}} F(u^{(j)}) - v^\top \theta_1^{(j)} \|, \\\|\frac{1}{\mu}\textbf{1} - S^{(j)}\theta_1^{(j)} \|, \| A u^{(j)} - b - s^{(j)} \| \Big\} < \frac{1}{\mu} 
\end{multline}
is met, at which point the value of $\mu$ is updated using 
\begin{align} \label{mu_update}
\mu = \min \{ \overline{\mu}, k_\mu \mu \},
\end{align}
where $\overline{\mu} > 0$ is a pre-determined upper limit on the value of $\mu$, and $k_\mu > 1$ is a pre-determined, arbitrary constant. The algorithm as a whole is then terminated when both conditions (\ref{termination_criteria}) and $\mu = \overline{\mu}$ are met. Algorithm \ref{algorithm_PPDIPM} presents a pseudocode implementation of the above description.

\begin{algorithm}
\caption{Projected Interior Point Method\label{algorithm_PPDIPM}}
\begin{algorithmic}[1]
\STATE Set parameters $\mu_0>0$, $\overline{\mu} \geq \mu_0$, $k_\mu > 1$, and $\tau \in (0,1)$
\STATE Initialize $u^{(0)}$, $s^{(0)}$, and $\theta_1^{(0)}$ using Algorithm \ref{algorithm_intialisation}
\STATE $\mu \gets \mu_0$ and $j \gets 0$
\STATE Determine $\A^{(0)}$ and $\mathcal{D}^{(0)}$
\REPEAT 
\STATE Calculate $\Delta \theta_1^{(j)}$, $\Delta \tilde{u}^{(j)}$, and  $\Delta s$ using (\ref{linear_system_solution1}-\ref{linear_system_solution2})
\STATE Calculate $\alpha_s$ and $\alpha_\theta$ from (\ref{alpha_s}-\ref{alpha_theta})
\STATE Update $u^{(j+1)}$, $s^{(j+1)}$, and $\theta_1^{(j+1)}$ with (\ref{u_update}-\ref{theta_update})
\STATE $j \gets j + 1$
\STATE Determine $\A^{(j)}$ and $\mathcal{D}^{(j)}$
\IF{{$r_{\text{IP}}^{(j)}  < \frac{1}{\mu}$}}
\STATE Update $\mu = \min \{ \overline{\mu}, k_\mu \mu \}$
\ENDIF
\UNTIL{$\mu = \overline{\mu}$ and $r_{\text{IP}}^{(j)}  < \frac{1}{\mu}$} 
\STATE $u^\star \gets u^{(j)}$
\end{algorithmic}
\end{algorithm}
\subsection{Complexity}\label{IP_complexity}
Lines 6-13 in Algorithm \ref{algorithm_PPDIPM} constitute the recursive elements of the projected interior point algorithm, and Table \ref{projected_complexity} presents an analysis of the complexity of equations (\ref{linear_system_solutions}-\ref{termination_criteria}). The significance of enforcing the bounds on $u$ as a projection is illustrated, as the complexity of the projected interior point operations is a function of the first dimension of $A$, which is $2N$ here. If the bounds on $u$ were applied as log barrier functions (i.e $A = (\Psi,-\Psi,I,-I)$ and $b = (\Phi(x_0 - \overline{x}),-\Phi (x_0 - \underline{x}),\underline{u},-\overline{u})$ in (\ref{eqn_ip_problem_1})) the relevant dimension would instead be $4N$, and the complexity of each update would become $\mathcal{O}(4^n N^n)$.   It can be seen that (\ref{linear_system_solution1}) is the most computationally demanding update due to the presence of both a dense matrix-matrix multiplication and a dense matrix inverse (note that \textit{diagonal} matrix operations, e.g. $(\nabla^2_{ \mathcal{D} } F(u^{(j)}))^{-1}$, are omitted from Table \ref{projected_complexity}). The computational complexity of each iteration of the projected interior point algorithm is therefore $\mathcal{O}(2^nN^n)$, where $n \leq 3$ is determined by the method used for matrix multiplication and inversion.

\begin{table}
\begin{center}
\caption{ Summary of the matrix/vector operations present in each equation used in the projected interior point algorithm, where v refers to a vector, and M refers to a non-diagonal matrix.\label{projected_complexity}}
\begin{tabular}{c c c c c}
Equation(s) & M$^{-1}$ & M$\cdot$M & M$\cdot$v & $\mathcal{O}(2^nN^n)$ \\ \hline
(\ref{linear_system_solution1}) & Yes & Yes & Yes & $n \leq 3$ \\
(\ref{linear_system_solution2}-\ref{linear_system_solution3}) & No & No & Yes & $n \leq 2$ \\
(\ref{fraction_to_the_boundary}-\ref{variable_updates}) & No & No & No & $n=1$ \\
(\ref{termination_criteria}) & No & No & Yes & $n \leq 2$ \\
\end{tabular}
\end{center}
\end{table}
\subsection{Convergence}

The algorithm presented in Section \ref{section_algorithm} can be interpreted as a projected Newton method \cite{Optimization1982} used to obtain a stationary point, ($u^\circ$, $s^\circ$, $\theta_1^\circ$), of the function
\begin{equation}\label{convergence_lagrangian}
\hat{\mathcal{L}}(u,s,\theta_1, \mu ) = F(u) + B(s, \mu ) - \theta_1^\top (Au - b - s)
\end{equation}
for a given value of $\mu$, subject to the constraint $\underline{u} \leq u \leq \overline{u}$. The strict inequality in the definition of $\mathcal{A}^\circ$ means that there is a region of $(u,s,\theta_1)$-space close to $(u^\circ,s^\circ,\theta_1^\circ)$ where $\mathcal{A}^{(j)}=\mathcal{A}^\circ$, and a subset of this region will meet the conditions for local quadratic convergence of Newton's method for nonlinear equations {\cite[pp.~276]{NumericalOptimization}}. This means that the R.H.S of (\ref{linear_system_of_equations}) converges to 0 as $j\to\infty$, and the termination criterion (\ref{termination_criteria}) will be met in a finite number of steps. Global convergence could be ensured by adapting the $\Delta u$ step at each iteration with a line-search {\cite[pp.~30]{NumericalOptimization}} of an appropriate merit function, although the merit function from \cite{Optimization1982} cannot be used as the stationary point ($u^\circ$, $s^\circ$, $\theta_1^\circ$) is not a minimum of the function (\ref{convergence_lagrangian}) in general. Despite this limitation, convergence was demonstrated for all problem classes in the simulations that follow.

We have previously demonstrated that $u^{\con}\to u^\star$ as $\mu$ is increased towards $\infty$, and the value of $u^{(j)}$ when the criterion (\ref{termination_criteria}) is met converges to $u^\con$ as $\mu$ is increased towards $\infty$. Therefore, the value of $u^{(j)}$ when Algorithm \ref{algorithm_PPDIPM} terminates can be made arbitrarily close to the minimizing argument of (\ref{eqn_ip_problem_1}) by setting $\overline{\mu}$ arbitrarily high. The algorithm could be further optimised to update the value of $\mu$, possibly at every iteration, to ensure that the iterate remains in, or at least near, to the superlinearly convergent region around $u^{\con}$, $s^\con$, and $\theta_1^\con$, although in the numerical experiments that follow we demonstrate superlinear convergence for a broad class of problems using the simple update (\ref{mu_update}).


\section{Alternating Direction Method of Multipliers}\label{section_ADMM}
We compare the performance of the projected interior point method in simulation with the ADMM algorithm proposed in \cite{EastCDC2018}, which is restated here with a new complexity analysis. We introduce a dummy variable, $\zeta$, and rewrite (\ref{eqn_convex_problem_new}) as
\begin{equation} \label{eqn_ADMM_problem}
\begin{aligned} 
\min_{u} \ & F(u) + \Lambda(u, x) \\
\text{s.t.} \ & \zeta = -u \\
& x = \Phi x_0 + \Psi \zeta
 \end{aligned}
\end{equation}
where the indicator function, $\Lambda$, is defined by
\begin{align*}
\Lambda(u, x) &= \sum_{k=0}^{N-1}\Lambda_k^{u}(u_k) + \sum_{k=1}^N \Lambda^{x}_k(x_k) \\
\Lambda^{z}_k(z) &= \begin{cases} 0 & \underline{z}_k \leq z_k \leq \overline{z}_k \\ \infty & \text{otherwise} \end{cases},
\end{align*}
and the augmented Lagrangian associated with (\ref{eqn_ADMM_problem}) is
\begin{align*}
L(u, \zeta, x, \lambda_1 ,\lambda_2) =& F(u) + \Lambda(u, x)  + \frac{\rho_1}{2} \| u + \zeta + \lambda_1 \|^2 \\
& + \frac{\rho_2}{2} \| \Phi x_0 + \Psi\zeta - x + \lambda_2 \|^2 
\end{align*}
The ADMM algorithm is initialized with the values
\begin{equation}\label{ADMM_intial_values}
\begin{aligned}
&u^{(0)} = \ou, \ \zeta^{(0)} = -u^{(0)}, \ x^{(0)} = \Pi^x\left( \Phi x_0 + \Psi \zeta^{(0)} \right)\ \\
&\ \lambda_1^{(0)} = 0, \ \lambda_2^{(0)} = \Phi x_0 + \Psi \zeta^{(0)} - x^{(0)}
\end{aligned}
\end{equation}
and by defining projection functions
\begin{align*}
\pi^{z}_k(z) &= \min \{\overline{z}_k, \max \{ \underline{z}_k, z \} \}, \\
\Pi^z(z) &= [\pi^z_1(z_1),\dots,\pi^z_N(z_N)],
\end{align*}
the iteration is given by
\begin{equation}\label{ADMM_iteration}
\begin{aligned}
u^{(j+1)}_k =&  \pi_k^u  \bigg[ \arg \min_{u_k} \hfk(\hpdrvk - \hgkinv(u_k)) \\
& + \frac{\rho_1}{2} (u_k + \zeta^{(j)}_k + \lambda_{1,k}^{(j)})^2   \bigg] && k \in \p \\
u^{(j+1)}_k =& \ou_k && k \notin \p \\
x^{(j+1)}=&  \Pi^x \big[ \Phi x_0 + \Psi \zeta^{(j)} + \lambda_2^{(j)} \big]  \\
 \zeta^{(j+1)} =&  (\rho_1 I + \rho_2 \Psi^\top \Psi)^{-1} \big[ -\rho_1 (u^{(j+1)} + \lambda_1^{(j)}) \\
& -\rho_2 \Psi^\top (\Phi x_0 - x^{(j+1)} + \lambda_2^{(j)}) \big]\\
\lambda_1^{(j+1)} =& \lambda_1^{(j)} + u^{(j+1)} + \zeta^{(j+1)} \\
\lambda_2^{(j+1)} =& \lambda_2^{(j)} + \Phi x_0 + \Psi \zeta^{(j+1)} - x^{(j+1)}
\end{aligned}
\end{equation}
Problem (\ref{eqn_ADMM_problem}) can be shown to be equivalent to the canonical ADMM form {\cite[equation (3.1)]{Wang2014}}, for which the iteration is equivalent to (\ref{ADMM_iteration}). As $F(u) + \Lambda(u,x)$ is convex, it can therefore be concluded that iteration (\ref{ADMM_iteration}) will converge to the solution of (\ref{eqn_ADMM_problem}) since the residuals defined by
\begin{align*}
r_{\text{P}}^{(j+1)} = & \begin{bmatrix}I & 0 \\ 0 & -I \end{bmatrix} \begin{bmatrix}
u^{(j+1)} \\ x^{(j+1)}
\end{bmatrix} + \begin{bmatrix} I \\ \Psi \end{bmatrix} \zeta^{(j+1)} + \begin{bmatrix}
0 \\ \Phi x_0
\end{bmatrix}  \\
 r_{\text{D}}^{(j+1)} =&  \begin{bmatrix} \rho_1 I \\ -\rho_2 \Psi  \end{bmatrix} \begin{bmatrix} \zeta^{(j)} - \zeta^{(j+1)} \end{bmatrix} 
\end{align*}
necessarily converge to zero. The algorithm is terminated when the conditions $\|r_{\text{P}}^{(j+1)}\| \leq \epsilon$ and $\|r_{\text{D}}^{(j+1)}\| \leq \epsilon$ are met, where $\epsilon$ is a pre-determined threshold. 
\begin{algorithm}
\caption{Alternating Direction Method of Multipliers\label{algorithm_ADMM}}
\begin{algorithmic}[1]
\STATE Initialize $u^{(0)}$, $x^{(0)}$, $\zeta^{(0)}$, $\lambda_1^{(0)}$, $\lambda_2^{(0)}$ with (\ref{ADMM_intial_values})
\STATE $j \gets 0$
\WHILE{$\|r_{\text{P}}^{j+1}\| > \epsilon$ and $\|r_{\text{D}}^{j+1}\| > \epsilon$}
\STATE Calculate $u^{(j+1)}$, $x^{(j+1)}$, $\zeta^{(j+1)}$, $\lambda_1^{(j+1)}$, $\lambda_2^{(j+1)}$ from (\ref{ADMM_iteration})
\STATE $j\gets j+1$
\ENDWHILE
\STATE $u^\star \gets u^{(j)}$
\end{algorithmic}
\end{algorithm}

\subsection{Complexity}\label{ADMM_complexity}  
Algorithm \ref{algorithm_ADMM} shows a pseudocode implementation of the ADMM algorithm, and the computational complexity of each recursive variable update is presented in Table \ref{ADMM_complexity}. Each $u_k$ update is an unconstrained convex optimization problem that we solve here using a Newton method with a backtracking line search, so the $u$ update therefore scales linearly with $N$ if these updates are performed sequentially, or is constant if each $k$ update can be performed in parallel. The matrix inversion in the $\zeta$ update can be computed offline as it involves no decision variables, so only a dense matrix multiplication is required. We note that multiplication by $\Psi$ is not considered a matrix multiplication in the analysis presented in Table \ref{ADMM_complexity_table}, as it is the equivalent of a linear filtering operation and therefore scales linearly with $N$. This implies that the residual updates also scale linearly with $N$ if they are analytically block multiplied. The complexity of the ADMM iteration is therefore $\mathcal{O}(N^n)$ where $n \leq 2$ is determined by the method used for matrix multiplication.

\begin{table}
\begin{center}
\caption{ Summary of the matrix/vecotr present in the ADMM variable updates, where 'v' refers to a vector, and 'M' refers to a non-diagonal matrix other than $\Psi$.}\label{ADMM_complexity_table}
\begin{tabular}{c c c c c}
Update & M$^{-1}$ & M$\cdot$M & M$\cdot$v & $\mathcal{O}(N^n)$  \\ \hline
$u$ & No & No & No & $n = 1$ \\
$x$ & No & No & No & $n = 1$ \\
$\zeta$ & No & No & Yes & $n \leq 2$ \\
$\lambda_1, \lambda_2$ & No & No & No & $n = 1$ \\
$r_{\text{P}},r_{\text{D}}$ & No & No & No & $n = 1$ \\
\end{tabular}
\end{center}
\end{table}

\section{Numerical Experiments}\label{section_numerical_experiments}

To compare the performance of the algorithms without reference to a particular PHEV powertrain, single-shot instances of problem (\ref{eqn_convex_problem}) were created with randomly generated parameters. For each instance of the energy manangement problem, a nominal sampling frequency of 1Hz (i.e $\delta=1$\,s) was assumed, and it was also assumed that the engine is always on and the clutch is engaged i.e $\hsk = 1$ (for the purposes of these experiments the switching heuristic is arbitrary). Using observations from previous experiments \cite{Buerger}, predictions were made of driver power demand as $\hpdrvk \in [-2.5 \times 10^3,10\times10^{3}]\,W$ and hardware parameters were generated from the distributions $\ak2,\bk2 \in [0.5\times10^{-5},1.5\times10^{-5}]\,W^{-1}$, $\ak1,\bk1 \in [0.5,1.5]$, and $\ak0,\bk0 = 0\,W$, with $V_{oc}=300\,V$ and $R=0.1\, \Omega$. The limits on state and input were set at $\overline{x} = 10^{5} \,J$, $\underline{x} = 0 \, J$, $\ou = 15\times10^3 \, W$ and $\uu = -15\times10^3 \, W$, and an initial state of charge of $x_0 = 0.9\overline{x}$ was assumed. These limits have little physical significance within the context of this experiment, and were chosen to ensure that the problems are feasible and that both the state and input constraints were active. Finally, the effect of varying $\tau$ was not investigated, and was set at $\tau = 0.995$ for all projected interior point solutions. The simulations were implemented in Matlab on a 2.6GHz Intel Core i7-6700HQ CPU.

\subsection{Optimum}

It is demonstrated in Section \ref{section_interior_point} that the output of Algorithm \ref{algorithm_PPDIPM} can be made arbitrarily close to the solution of (\ref{eqn_ip_problem_1}) by using a sufficiently large value of $\overline{\mu}$, and it is therefore necessary to determine a value of $\overline{\mu}$ that can be used to obtain a sufficiently accurate approximation of $u^\star$. 10 problems for each horizon length $N=100,200,300,400$ were generated for a total of 40 problems, and the projected interior point method was used to obtain a solution for each with the parameters $\mu_0 = \overline{\mu}$, and $\overline{\mu}$ iteratively increased from $10^2$ to $10^5$ in 20 logarithmically spaced points ($k_\mu$ is not required as the algorithm will terminate when condition (\ref{termination_criteria}) is first met). For each $\bar{\mu}_i$, $i=1,\ldots,20$, the control input vector, $u^\ast_i$, at termination was recorded, forming the sequence $u^{\ast}_1,\dots,u^{\ast}_{20}$ for each problem. Fig. \ref{figure1} shows that as $\overline{\mu}$ was increased, the norm of the difference between the values of $u^{(j)}$ when the algorithm terminated with successive values of $\bar{\mu}$, $\|u^{\ast}_i - u^{\ast}_{i-1}\|$, decreased within an inverse band for all cases, and that at $\overline{\mu} = 10^5$ this metric has reduced to less than $1$ for all 40 problem cases. Given that the decision variable, $u$, can take a range of values in the order of $10^4$, it was concluded that $\overline{\mu}=10^5$ is therefore sufficiently large to provide a highly accurate solution to problem (\ref{eqn_ip_problem_1}), and all future references to $u^\star$ refers to control inputs found using Algorithm \ref{algorithm_PPDIPM} with $\mu_0 = \overline{\mu} = 10^5$.

\begin{figure}
\includegraphics[scale=1]{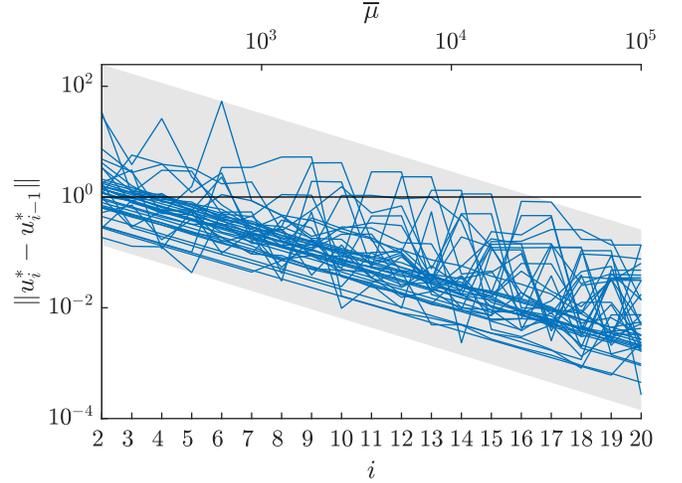}
\caption{Data showing the decrease in change of control vector obtained by the projected interior point method as $\overline{\mu}$ is increased from $10^0$ to $10^5$. The grey shaded area shows the minimum width linear band that contains all of the data points, and the dashed lines are simply used to highlight values on the vertical and horizontal axes. \label{figure1}}
\end{figure}

\subsection{Algorithm Tuning}

Both algorithms have multiple parameters that must be tuned to provide computationally efficient solutions. For the ADMM algorithm, $\rho_1$ and $\rho_2$ (which can be loosely interpreted as the step length in a gradient descent algorithm) must be determined, whilst $\mu_0$ and $k_\mu$ must be determined for the projected interior point method. The energy management MPC framework is commonly implemented with a shrinking horizon, and it is therefore important that the same set of parameters provide a similar level of performance for a broad class of problems over both long and short horizons. This section details the results of investigations to determine the most computationally efficient combination of parameters for each algorithm.

To determine the values of $\rho_1$, $\rho_2$, $\mu_0$, and $k_\mu$, that provide optimal convergence for the ADMM and projected interior point algorithms, 20 new problems were generated for each horizon length of of $N=100,200,300,400$. These were solved using the projected interior point algorithm with $\overline{\mu} = 10^5$, $10^{-5}\leq \mu_0 \leq 10^{1}$, and $1 < k_\mu \leq 10^{5}/\mu_0$ (any value of $k_\mu$ greater than this would ensure that $\mu$ is projected onto $\overline{\mu}$ during the first update step (\ref{mu_update})). For each problem instance, the number of iterations required for the algorithm to terminate were recorded, and the average for each combination of parameters is shown in Fig. \ref{figure2}. It can be seen that there is a vertically banded region at $\mu_0 \approx 10^{-1}$ that requires a minimum number of iterations for all horizon lengths, and that there is a profile to the search space that varies little with changes in horizon length. The values $\mu_0 = 10^{-1}$ and $k_\mu = 10^4$ were therefore selected as the optimal parameters. 

For the ADMM algorithm a different approach was taken, as the number of iterations required to achieve the same level of accuracy as the projected interior point algorithm with $\overline{\mu}=10^5$ made a similar parameter search intractable. Instead, a total of 100 ADMM iterations were completed for each problem (ignoring the stated termination criteria) with $10^{-6} \leq \rho_1 \leq 10^{-2}$ and $10^{-8} \leq \rho_2 \leq 10^{-4}$. The average norm of the difference between the control input at the 100th iteration of ADMM, $u^{(100)}$, and the optimum, $u^\star$, was recorded for each case. The results are shown in Fig. \ref{figure2}, and there is a clear region within approximately two orders of magnitude of both $\rho_1$ and $\rho_2$ where the control vector has a minimum error relative to the optimum, and this region does not change significantly with horizon length. The values of $\rho_1 = 6 \times 10^{-5}$ and $\rho_2 = 4 \times 10^{-7}$ were therefore selected as the optimal parameters.

\begin{figure}
\begin{center}
\includegraphics[scale=1]{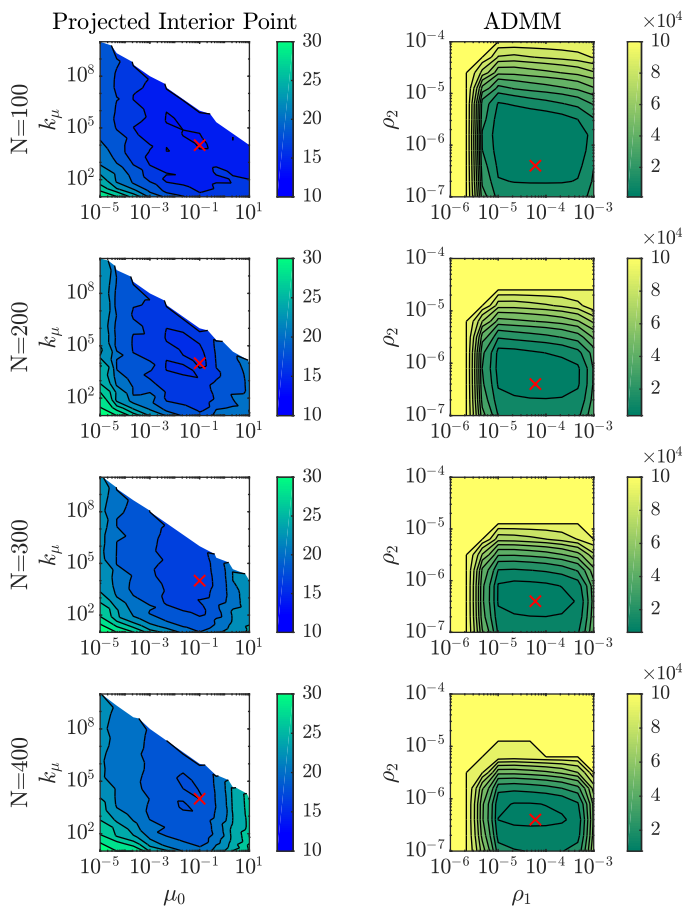}
\caption{Results of parameter tuning for both the projected interior point and ADMM algorithm. The projected interior point figures show the average number of iterations required to meet the termination criteria $\overline{\mu}=10^5$ and are saturated at 30 iterations, whereas the ADMM figures show the average error measured by $\|u^{(100)} - u^\star\|$ and are saturated at $10\times10^4$. The red crosses show the chosen values for simulations described in Section \ref{computational_performance}. \label{figure2}}
\end{center}
\end{figure}

\subsection{Computational Performance}\label{computational_performance}

After tuning the parameters of both the projected interior point and ADMM algorithm to the class of problems being investigated, it was possible to analyse their comparative computational performance. This was achieved in two steps: firstly the termination criteria for a `sufficiently' accurate solution was determined, then the variation in computational time with horizon length was investigated. 

A further 20 test cases were generated consisting of 5 cases for each of $N=100,200,300,400$, and using the values of $\rho_1$, $\rho_2$, $\mu_0$ and $k_\mu$ determined during the tuning phase, each problem was solved using ADMM for 100 iterations, and using the projected interior point algorithm with $\overline{\mu} = 10^5$. The absolute difference between the cost evaluated at iteration $j$ and the optimal cost, $| F(u^{(j)}) - F(u^\star)|$, is shown in Fig. \ref{figure3}. The results clearly demonstrate sublinear convergence for the ADMM algorithm, whilst the projected interior point results show superlinear convergence. Therefore, the projected interior point algorithm can produce an extremely accurate solution within a few tens of iterations, whereas significantly more iterations are required for ADMM. 

\begin{figure}
\includegraphics[scale=1]{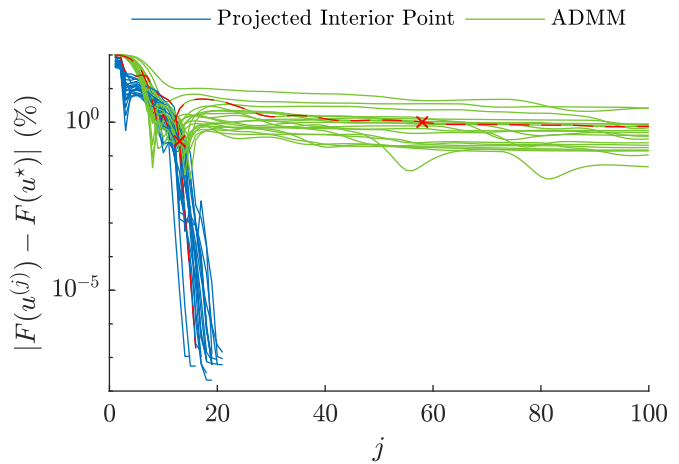}
\caption{Curves showing the normalised error between the cost evaluated at iteration $j$ and the optimum, as a percentage, for 20 systems using both the projected interior point method and ADMM. The curves highlighted in red correspond to the system illustrated in Fig. \ref{figure4}, and the red cross shows the iteration from which those curves were taken. \label{figure3}}
\end{figure}

A threshold of 1\% was considered high enough for `sufficient' accuracy, as this is likely to be lower than the level of uncertainty in state measurements used to formulate the problem, and the results shown in Fig. \ref{figure4} illustrate that the deviation between the control vectors obtained by both the ADMM and projected interior point algorithms and the optimum are almost imperceptible at this level of convergence. A slightly larger deviation is observed between the state trajectories, particularly that obtained with ADMM, however this is because the state trajectory is a function of the integral of the control input, and as the cost is not a function of state-of-charge this does not necessarily indicate greater sub-optimality. A key property of the algorithms is also demonstrated in Fig. \ref{figure4}: the state constraints are only guaranteed for both algorithms when the residuals, $r_{\text{IP}}, r_{\text{P}},$ and $r_{\text{D}}$, are precisely zero (this is also why we use the absolute error in Fig. \ref{figure3}, as the cost evaluated for each iteration of can be \textit{lower} than $F(u^\star)$). Therefore, the termination criteria do not provide a guarantee of enforcing the state constraints, and we can see that for the final three timesteps the lower state limit is violated by $\approx 2.5\%$ of the feasible state band for the ADMM trajectory. This limitation can be reduced by tightening the algorithms' convergence thresholds, which makes it more significant for ADMM due to its sublinear rate of convergence.

\begin{figure}
\includegraphics[scale=1]{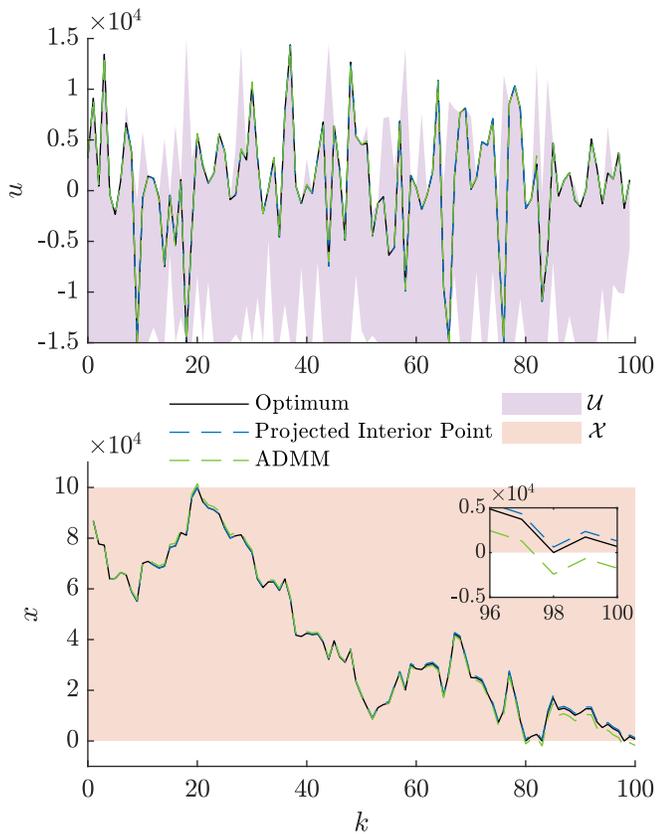}
\caption{The control and state vectors for the systems highlighted with red crosses in Fig. \ref{figure3}, plotted against the optimum $u^\star$ and $x^\star$. The feasible tubes $\U$ and $\X$ are also included, and note that the additional constraints enforced during the convex formulation specified in section \ref{section_convex_reformulation} have significantly restricted the upper and lower bounds on $\U$ from the original $\pm 1.5\times10^4$. \label{figure4}}
\end{figure}

Based on residuals for the projected interior point and ADMM trajectories shown in Fig. \ref{figure4}, it was assumed that $\epsilon = 4 \times 10^3$ and $\overline{\mu} = 1 $ enforce a `sufficient' level of convergence. A further 20 problems were generated for horizons $50 \leq N \leq 1000$, and the iterations to completion, mean time taken per iteration, and time to completion were recorded for each using both ADMM and the projected interior point algorithm. For comparison, the problems were also solved using CVX with default solver SDPT3 v.4.0 \cite{Toh1999} and default error tolerance, for which only the total time was recorded (it is not possible to separate the total time from the individual iterations or the overhead required to parse the problem when using CVX).  The results are shown in Fig. \ref{figure5}, where it can be seen that whilst the uncertainty in the number of ADMM iterations is high (from as low as 50 to as high as 400), the band of uncertainty is near constant as the horizon is increased, so it can be assumed that the expected number of iterations is effectively constant with horizon length. The uncertainty for the number of projected interior point iterations is lower, and fewer iterations are required for all horizon lengths, however the number of iterations also increases linearly with horizon length from $\sim10$ iterations at $N=50$ to $\sim16$ iterations at $N=1000$.

\begin{figure}
\includegraphics[scale=1]{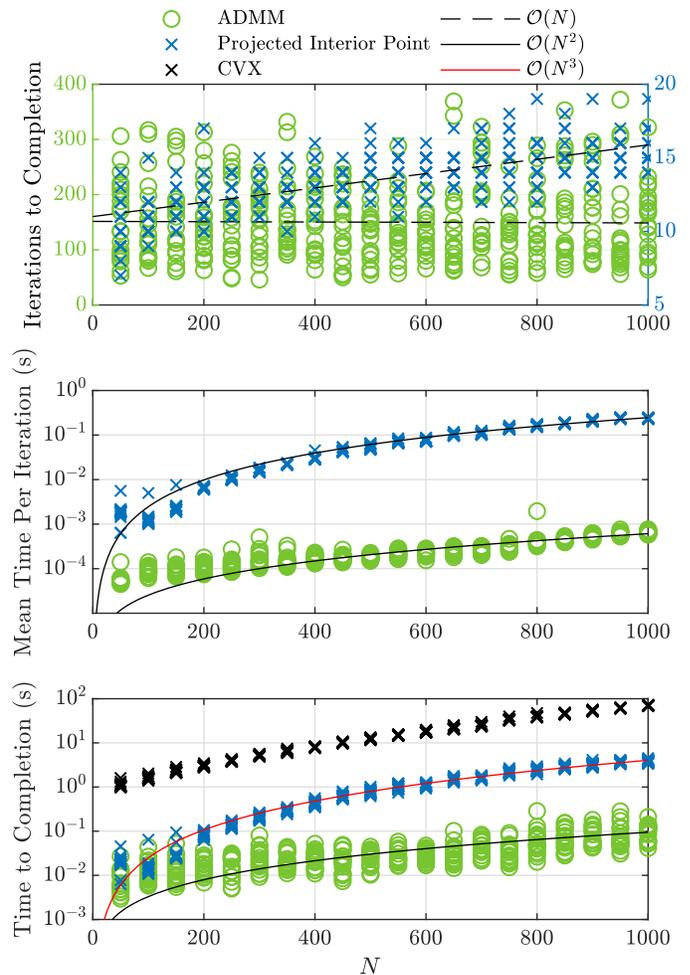}
\caption{Results showing the number of iterations required, mean time per iteration, and time to completion for 20 systems with $50 \leq N \leq 1000$, with linear, quadratic, and cubic trendlines. \label{figure5}}
\end{figure}

It was shown in sections \ref{IP_complexity} and \ref{ADMM_complexity} that, as the horizon length is increased, the computational burden of the projected interior point algorithm is dominated by the $\Delta \theta_1$ update and the ADMM algorithm is dominated by the $\zeta$ update, so the methods used to perform these calculations will largely determine the time required per iteration. The Matlab operations \texttt{x=A\textbackslash b} and \texttt{x=A$^*$b} were used here, and a quadratic trendline is shown to have an approximate fit for both in Fig. \ref{figure5}, although as $N$ was increased towards $1000$, the projected interior point iterations took over two orders of magnitude longer than the ADMM iterations. It was therefore expected that the total time taken for the ADMM algorithm would scale quadratically with horizon length, whereas the time taken for the projected interior point method would scale cubically with horizon length, and this is supported by the results shown in the bottom plot in Fig. \ref{figure5}. It is also shown that (assuming an interval of 1 second between controller optimizations) the projected interior point is only suitable up to a horizon of $N \approx 500$, whereas even up to the maximum horizon length of $N=1000$, the ADMM algorithm only required $\sim 0.1s$. From the previous scaling properties it can be assumed that ADMM is real time implementable for horizons significantly in excess of $1000$ (the performance of the ADMM algorithm presented here exceeds that presented in \cite{EastCDC2018} due to a fully vectorized software implementation). Therefore, whilst the projected interior point algorithm has been shown to converge to an extremely accurate solution in fewer iterations than the ADMM algorithm, for the hardware used in these experiments, less time is required for a moderate level of accuracy using ADMM, and the ADMM algorithm scales better with horizon length. If the accuracy requirement were tightened, however, it is likely that this performance relationship would change significantly.

In comparison, CVX was unable to obtain solutions in less than 1s for any horizon length, and was at least an order of magnitude slower than both algorithms over all horizon lengths; compared to ADMM it was a factor of 1000 slower for $N = 1000$. Although CVX is solving the problem to a different error tolerance, it would be expected that ADMM would still be faster were its termination threshold significantly tightened. This is the first demonstration of a method capable of solving the energy management problem in real time, over long horizons ($\geq$1000 samples) when nonlinear system dynamics are considered and hard limits on both power and state of charge are enforced over the entire horizon.

To conclude the numerical experiments, we note that the Newton method for the projected interior point requires the solution to non-diagonal linear systems of equations, which in turn is typically solved using BLAS \cite{BLAS}. This is not an issue when solving the problems on desktop hardware as demonstrated here, but may not be an option for the embedded hardware used for an online solution in a vehicle. In this case only the ADMM algorithm is suitable, as although it also requires a Newton method for the individual control variable updates, this can be performed element-wise and therefore does not require a matrix inversion step.

\section{Conclusion}\label{section_conclusion}
This paper proposes a projected interior point method for the solution of a convex formulation of the optimization problem associated with nonlinear MPC for energy management in hybrid electric vehicles. The performance w.r.t the tailored ADMM algorithm of \cite{EastCDC2018} is demonstrated through numerical experiments, and the projected interior point algorithm is shown to have faster convergence (superlinear) for the class of problems investigated, although the ADMM algorithm is shown to have superior numerical performance and scaling properties when a modest level of accuracy is required. Both algorithms are also shown to have superior computational performance to general purpose convex optimization software.

\bibliographystyle{IEEEtran}
\bibliography{bibliography}





\end{document}